\newtheorem{assumption}{Assumption}
\newtheorem{theorem}{\bf{Theorem}}
\newenvironment{proof}{\noindent{\hskip 2em \textbf{Proof: }}}{\hfill $ \blacksquare $ \vskip 4mm}
\begin{document}

\begin{frontmatter}

\title{Nonlinear Cooperative Control of Double Drone-Bar Transportation System\thanksref{footnoteinfo}} 

\thanks[footnoteinfo]{This work is supported in part by National Natural Science Foundation of China under Grant 61873132 and Grant 61903200, in part by Natural Science Foundation of Tianjin under Grant 16JCZDJC30300 and Grant 19JCQNJC03500, in part by the China Postdoctoral Science Foundation under Grant 2020M670632, and in part by the Fundamental Research Funds for the Central Universities Nankai University under Grant 63201194.}

\author{Peng~Zhang}\ead{zhangpeng@mail.nankai.edu.cn},    
\author{Yongchun Fang\thanksref{footnoteinfo}}\ead{fangyc@nankai.edu.cn},                
\author{Xiao~Liang}\ead{liangx@nankai.edu.cn},  
\author{He~Lin}\ead{linhe@mail.nankai.edu.cn}, 
\author{Wei~He}\ead{hewei@mail.nankai.edu.cn} 

\thanks[footnoteinfo0]{Corresponding author at Institute of Robotics and Automatic Information System, Nankai University, Tianjin 300353, China. Tel.: +86 22 23505706; fax: +86 22 23500172.}

\address{Institute of Robotics and Automatic Information System,College of Artificial Intelligence\\ Tianjin Key Laboratory of Intelligent Robotics, Nankai University, Tianjin 300350, China}  

\begin{keyword}                           
Nonlinear control, Lyapunov techniques, Swing elimination, Cooperative control.               
\end{keyword}                             

\begin{abstract}                          
Due to the limitation of the drone's load capacity, various specific tasks need to be accomplished by multiple drones in collaboration. In some transportation tasks, two drones are required to lift the load together, which brings even more significant challenges to the control problem because the transportation system is underactuated and it contains very complex dynamic coupling. When transporting bar-shaped objects, the load's attitude, the rope's swing motion, as well as the distance between the drones, should be carefully considered to ensure the security of the system. So far, few works have been implemented for double drone transportation systems to guarantee their transportation performance, especially in the aforementioned aspect. In this paper, a nonlinear cooperative control method is proposed, with both rigorous stability analysis and experimental results demonstrating its great performance. Without the need to distinguish the identities  between the leader and the follower, the proposed method successfully realizes effective control for the two drones separately, mainly owning to the deep analysis for the system dynamics and the elaborate design for the control law. By utilizing Lyapunov techniques, the proposed controller achieves simultaneous positioning and mutual distance control of the drones, meanwhile, it efficiently eliminates the swing of the load. Flight experiments are presented to demonstrate the performance of the proposed nonlinear cooperative control strategy.
\end{abstract}

\end{frontmatter}

\section{Introduction}
Drones are now widely used for material transportation because of their ability to take off and land vertically \cite{uav1,uav2,uav3,uav4,uav5,uav6,uav7,uav8,uav9,uav10}. Aerial transportation, which is not limited to terrain, plays an increasingly important role in emergency cases \cite{cheap1,cheap2}. Therefore, relevant research have always been concerned. When exploring drone delivery, researchers mainly adopt two methods, one of which is to use clamping claws or mechanical arms for transportation \cite{grasp1,grasp2,grasp3,grasp4}. In this mode, the attitude of the load is highly controllable, which makes it convenient for obstacles avoidance. Another way of transportation is to suspend the load by sling ropes \cite{cable1,cable4}, which ensures the flexibility of the drone itself and reduces additional cost on claws and arms. However, the load suspended by the cable inevitably sways under the drones, which may bring hidden danger to system security. For some particular transportation tasks, such as medical supplies, the damage caused by the sway of the load is unacceptable. Besides, a single drone usually does not have the ability to lift a heavy or a large-sized load. To solve these problems, it is a very natural idea to utilize two or even more drones to complete the tasks by cooperative control. Noting that effective control of the drone itself is already a very challenging work because the drone is underactuated with fewer control inputs than its degrees of freedom (DOFs), cooperative control for the double-drone transportation system is much more difficult, and it is still a fairly open problem so far.

In the process of transportation, it is difficult to directly control the movement of the lifted-objects, which reflects the underactuated characteristics and brings many challenges. In recent years, the control for various underactuated systems, such as cranes and underwater vehicles, has received considerable interest and achieved satisfactory results by utilizing nonlinear control methods \cite{crane1,crane2,crane3,crane4,crane5,underwater1,underwater2,underwater3}. With the application of advanced sensors, feedback control for the intractable drone has recently attracted extensive attention\cite{drone1,drone2,drone3,drone4}. Some classic control strategies are used to realize various tasks such as aggressive maneuvers, dynamic trajectory generation, and so on. As for the drone transportation system, the load cannot be directly controlled, and the cables exacerbate the system nonlinearities and introduce complex dynamic couplings, these factors make the control problem for the drone transportion system full of all kinds of challenges. So far, researchers have done a lot of work to suppress and eliminate the load sway efficiently, with effective methods including geometric control\cite{geometric2}, dynamic programming\cite{dynnamic1}, differential flatness\cite{geometric1}, time-optimal motion planning\cite{cable3}, hierarchical control\cite{cable2}, and so on. Regardless of the control strategy, the transportation capacity of a single drone is always limited, it is thus urgently needed to implement transportation tasks by multiple drones with high-performance cooperative control strategy.


\textcolor[rgb]{0,0,1}{Compared with the control of a single drone with a suspended load, the study of double drone-bar system presents many challenges in both modeling and control, which are mainly summarized as follows: 1) The bar-load for the double drone-bar transportation system usually has a large size or weight. Thus, the load cannot be regarded as a point; instead, its size and attitude should be considered, making it complicated to describe the swing dynamics accurately.  2)  High degree of freedom implies that more variables are needed to describe the system, including some non-independent variables. Thus, how to deal with the variables without approximation for the modeling and control problem is a great challenge. 3) The proposed method must properly and efficiently coordinate the two drones throughout the transportation process. Swing elimination and cooperative control make the problem even more complicated. Due to the aforementioned facts, fewer research results have been reported for such systems.  }

\textcolor[rgb]{0,0,1}{Due to the aforementioned facts, few research results have been reported for such systems.} A vision-based collaborative control scheme is proposed in \cite{double1}, wherein the motion of the two drones can be well organized even without communication. However, without careful consideration for swing suppression, the speed of the drones has to be strictly limited within a certain range. Some load attitude control method is proposed in \cite{double5}, which drives the bar to the desired pose and guarantees exponential stability. Nevertheless, \cite{double5} ignores the information exchange and does not consider the collaboration between the drones. \textcolor[rgb]{0,0,1}{Linearization is used to infer the equilibrium's exponential stability in \cite{double5}, failing to establish a precise mathematical description for the double drone-bar dynamics. Due to the two drones' independent control, the advantages of information exchange are not fully implemented, limiting the applications in the double drone transportation system. Instead, G. Loianno \emph{et al.} propose a coordinated approach that enables each drone to construct its controller, utilizing not only its own sensor, but also the measurements acquired by other drones \cite{double2}, which inspires this work.} Unfortunately, the inherently rigid structure strictly determines the relative position between the two drones, which badly reduces the flexibility of the transportation system. The formation control law is presented in\cite{multi1} to endow the interconnected system with a continuum of equilibria. \cite{multi2} presents the control system for the transportation of a slung load by one or several helicopters and successfully implements experimental tests. H. Lee \emph{et al.} employ the parameter-robust linear quadratic Gaussian method to solve the problem of slung-load transportation, and prove the stability of the system with Lyapunov analysis \cite{multi3}. It is noted that, since the load is regarded as point mass and there is no actual information exchange between the drones, the methods proposed in \cite{multi1,multi2,multi3} cannot guarantee the smoothness and safety of the transportation process, and they cannot always yield satisfactory experimental results.


\textcolor[rgb]{0,0,1}{In many cases, the load may be too large or too heavy to be transported by only one drone.  Two drones will be used to coordinate the delivery of these goods. With the widespread application, this double drone-bar system's prospects are becoming broader, making the corresponding research very urgent. To solve the above-mentioned practical problems, this paper uses the Lagrangian modeling method to establish an accurate double drone-bar model. Based on the obtained model, a nonlinear coordinated control strategy is proposed to ensure the asymptotic stability of the required balance point. Lyapunov technique and LaSalle's invariance theorem strictly prove the correctness of the problem without turning to any linearization or approximation. Finally, extensive experiments are carried out to verify the effectiveness and robustness of the proposed method.}

The main contributions of this paper are summarized as follows:\textcolor[rgb]{0,0,1}{
\begin{enumerate}
	 \item An accurate model is set up for the double drone-bar transportation system without turning to any linearization or approximation, based on which, a nonlinear cooperative controller is designed, which carefully considers the complex nonlinearity and the strong coupling between the drones, and achieves asymptotic stability as proven by Lyapunov techniques and LaSalle's invariance.
	\item The proposed controller works independently on the two drones, meanwhile, the interconnection of the two drones by load is rigorously analyzed, which helps to effectively suppress the swing and control the relative position of the double drones.
	\item As supported by experimental results, even in the presence of various disturbances, such as wind or manual disturbances, the proposed control method presents strong anti-swing ability and good robustness for different transportation scenarios, superior to existing control methods.
\end{enumerate}}

The remaining part of the paper is organized as follows: Section $2$ uses the Lagrange modeling method to set up the model for the double drone-bar transportation system. Section $3$ divides the transportation system into the inner loop and the outer one, where Lyapunov method is employed to design an advanced control for the outer loop, with LaSalle's invariance principle utilized to analyze its stability. In section $4$, experimental results are included to verify the feasibility and superior performance of the proposed method. Section $5$ provides summaries and conclusions of this paper.

\begin{figure}[t]
	\begin{center}
		\includegraphics[width=7cm]{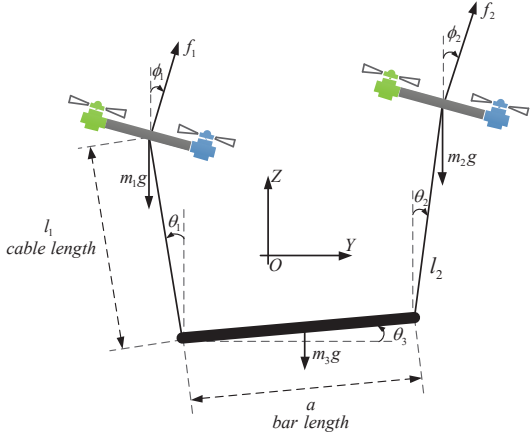}    
		\caption{Modeling of the drone-bar system}  
		\label{fig:2WMR}                                 
	\end{center}                                 
\end{figure}

\section{System Model Development}

\label{sec: systemmodel}
As illustrated in Fig. \ref{fig:2WMR}, there are two drones and a bar load in the transportation system, which are connected by two cables of equal length. For the following part of this paper, we refer to this system as drone-bar system. The drones' and the bar's positions are denoted by $\boldsymbol\xi_{1}(t) = [y_1(t),z_1(t)]^{T} = [y,z]^{T}$, $\boldsymbol\xi_{2}(t) = [y_{2}(t),z_{2}(t)]^{T}=[y+l_{1}S_{1}+l_{2}S_{2}+aC_{3},z-l_{1}C_{1}+l_{2}C_{2}+aS_{3}]^{T}$ and $\boldsymbol\xi_{3}(t) = [y_{3}(t),z_{3}(t)]^{T}=[y+l_{1}S_{1}+\frac{a}{2}C_{3}, z-l_{1}C_{1}+\frac{a}{2}S_{3}]^{T}$, respectively. $m_1,m_2,m_3$ stand for their masses.  $\boldsymbol\Theta(t) = [\theta_{1}(t),\theta_{2}(t),\theta_{3}(t)]^{T}$ denotes the swing signals, and they are defined in the manner as shown in Fig. \ref{fig:2WMR}. $f_{1}\in \mathbb{R}$ and $f_{2}\in \mathbb{R}$ are applied thrust force in the inertial frame, and $J_{1}\in \mathbb{R}^+,J_{2}\in \mathbb{R}^+$ represent the moment of inertia of the two drones. In addition, we denote by $l_{1} = l_{2} =l>0$ the cables' length, by $a>0$ the bar's length, by $g$ the gravity acceleration, by $\phi_1,\phi_2$ the rotating angle of the drones. The model of this drone-bar system is built on a plane perpendicular to the X-axis and constructed according to their geometric relationship.
It is essential in our implementation to decompose control design into two loops as the inner loop and the outer one. \textcolor[rgb]{0,0,1}{The lifting system has seven degrees of freedom, and their corresponding generalized forces in the outer loop subsystem are denoted by $Q_i$: $i \in [1,5],i\in N^+$. Lagrange's method is utilized for system modeling, and after performing a certain amount of mathematical operations, the outer loop dynamics of the double drone-bar system can be derived as
\begin{align}
\label{Q1}
Q_1 &= {f_1}\sin {\phi _1} + {f_2}\sin {\phi _2},\\
\label{Q2}
Q_2 &= {f_1}\cos {\phi _1} + {f_2}\cos {\phi _2},\\
\label{Q3}
Q_3 &= {f_2}\sin {\phi _2}{l_1}{C_1} + {f_2}\cos {\phi _2}{l_1}{S_1},\\
\label{Q4}
Q_4 &= {f_2}\sin {\phi _2}{l_2}{C_2} - {f_2}\cos {\phi _2}{l_2}{S_2},\\
\label{Q5}
Q_5 &= -f_2\sin\phi_2aS_3 +f_2\cos\phi_2aC_3.
\end{align}
}In the transportation task, we need to drive the two drones to the desired positions as $\boldsymbol\xi_{1d} = [y_{1d},z_{1d}]^{T}$, $\boldsymbol\xi_{2d} = [y_{2d},z_{2d}]^{T}$, while suppressing the swing of the load. Also, during the whole transportation process, the distance of the two drones should be limited within a certain range, so that the drones will not collide with each other when they are too close, or break the cables when trying to fly away from each other. In this sense, the mathematical representation of the control target is
\begin{align}
\label{goal}
&y_1\rightarrow y_{1d}, y_2\rightarrow y_{2d}, z_1\rightarrow z_{1d}, z_2\rightarrow z_{2d},\nonumber\\
&\theta_1\rightarrow \theta_{1d}, \theta_2\rightarrow \theta_{2d},
\end{align}
\textcolor[rgb]{0,0,1}{where the desired positions are constant. Generally speaking, the working scenes of multiple drones are mostly used to transport heavier objects, and most of the sling ropes used in the experiment are steel ropes that are not easy to bend.} Similar with the work presented in \cite{cable3,cable2,nest,na,doup3,und2}, we make the following reasonable assumption about the cables and the swing angles:\textcolor[rgb]{0,0,1}{
\begin{assumption}
	\label{assumption1}
	Aggressive motion control is not taken into account. The two cables in the transportation system are always in tension. Besides, the bar is always under the two drones in the vertical direction, and the bar is not in a straight line with either of the two cables, i.e.
	\begin{align}
	-\frac{\pi}{2}<\phi_{1}, \phi_{2},\theta_1(t),\theta_2(t),\theta_3(t)<\frac{\pi}{2}\quad\quad\forall \,t\ge 0.\nonumber
	\end{align}
\end{assumption}}

\section{Control Development}
\label{sec: Controller}

In this section, a nonlinear hierarchical control scheme is used to facilitate the design procedure. As \cite{cable1} shows, drone transportation systems have the cascade property, thus controllers can be designed for the inner loop and the outer one separately. The contribution of this paper mainly focuses on the design of the outer loop controller, and the inner loop adopts the controller presented in \cite{cable1,geometric2,geometric1}, which performs well in experimental tests.

\subsection{Outer Loop Controller Design}
\label{sec: outer loop control}

First, define the following error signals:
\begin{align}\label{error signals}
&\boldsymbol{e}_{{\xi}_{1}} = [e_{y_{1}},e_{z_{1}}]^T=[ y_1-y_{1d},z_1-z_{1d}]^T,\nonumber\\
&\boldsymbol{e}_{{\xi}_{2}} = [e_{y_{2}},e_{z_{2}}]^T=[ y_2-y_{2d},z_2-z_{2d}]^T.
\end{align}
To facilitate subsequent controller development and analysis, define the desired signals to satisfy:
\begin{align}
\label{relation0}
&y_{2d}-y_{1d}= a, z_{1d}=z_{2d},\theta_{1d}=\theta_{2d}=0.
\end{align}
Considering the underactuation property of the system, energy-based method is adopted to design the cooperative control. To this end, calculate the storage energy of the drone-bar system as:
\begin{align}
\label{energy}
E\! = \! \frac{1}{2}{\dot {\boldsymbol q}^T}\! M \!(\boldsymbol{q})\dot {\boldsymbol q} \!+ \! m_{3}g \! \left[  \frac{1}{2}{l_1}(1 \! -\! C_1)\! +\! \frac{1}{2}{l_2} ( 1 \!- \! C_2)\right],
\end{align}
\textcolor[rgb]{0,0,1}{where the state vector $\boldsymbol{q}\in\mathbb{R}^5$ and the inertia matrix $M(\boldsymbol{q})\in \mathbb{R}^{5\times 5}$ in (\ref{energy}) are defined as
\begin{align}
\boldsymbol{q}(t) &= \left[y(t),z(t),\theta_{1}(t),\theta_{2}(t),\theta_{3}(t)\right]^T,\\
M(\boldsymbol q) &= \left[ {\begin{array}{*{20}{c}}
	{{M_{11}}}&0&{{M_{13}}}&{{M_{14}}}&{{M_{15}}}\\
	0&{{M_{22}}}&{{M_{23}}}&{{M_{24}}}&{{M_{25}}}\\
	{{M_{31}}}&{{M_{32}}}&{{M_{33}}}&{{M_{34}}}&{{M_{35}}}\\
	{{M_{41}}}&{{M_{42}}}&{{M_{43}}}&{{M_{44}}}&{{M_{45}}}\\
	{{M_{51}}}&{{M_{52}}}&{{M_{53}}}&{{M_{54}}}&{{M_{55}}}
	\end{array}} \right],
\end{align}
with
\begin{align}
&{M_{11}}\! = \!{M_{22}} \!= \!{m_1}\! +\! {m_2}\! + \!{m_3}\!,\!{M_{55}}\! = \!{m_2}{a^2}\! + \!{m_3}{\left( {\frac{a}{2}} \right)^2},\nonumber\\
&{M_{13}} = {M_{31}} = {m_2}{l_1}{C_1} + {m_3}{l_1}{C_1},\nonumber\\
&{M_{14}} = {M_{41}} = {m_2}{l_2}{C_2},\,{M_{24}} = {M_{42}} =  - {m_2}{l_2}{S_2},\nonumber\\
&{M_{15}} = {M_{51}} =  - {m_2}a{S_3} - {m_3}\frac{a}{2}{S_3},\nonumber\\
&{M_{23}}\!\! = \!\!{M_{32}} \!\!=\! \!{m_2}{l_1}{S_1}\!\! + \!\!{m_3}{l_1}{S_1}\!,\!{M_{25}} \!\!=\! \!{M_{52}}\! \!=\! \!{m_2}a{C_3}\! \!+\!\! {m_3}\frac{a}{2}{C_3},\nonumber\\
&{M_{33}} = {m_2}{l_1}^2 + {m_3}{l_1}^2,\,{M_{44}} = {m_2}{l_2}^2,\nonumber\\
&{M_{34}}\! =\! {M_{43}}\! = \!{m_2}{l_1}{l_2}{C_{1 + 2}}\!,\!{M_{45}} = {M_{54}}\! =\!  - {m_2}{l_2}a{S_{2\! + \!3}},\nonumber\\
&{M_{35}} = {M_{53}} = {m_2}{l_1}a{S_{1 - 3}} + {m_3}{l_1}\frac{a}{2}{S_{1 - 3}}.\nonumber\\
\end{align}}
Taking the time derivative of $E$ results in
\begin{align}
\label{denergy}
\dot E\! = &{\dot y_1}f_1\sin\phi _1 \!+\! {\dot y_2}f_2\sin\phi _2\!+ \!{\dot z_1}\!\left[ {{f_1}\cos {\phi _1}
	\!-\! \big(\! {{m_1}\! + \!\frac{1}{2}{m_3}} \!\big)g} \right]\nonumber\\
&+ {\dot z_2}\left[ {{f_2}\cos {\phi _2} - \big( {{m_2} + \frac{1}{2}{m_3}} \big)g} \right].
\end{align}
Decomposing the applied thrust $f_1$, $f_2$ into $f_1\sin\phi_1$, $f_1\cos\phi_1$, $f_2\sin\phi_2$, $f_2\cos\phi_2$ as shown in (\ref{denergy}), the control inputs are construsted as follows:
\begin{align}
\label{input1}
{f_1}\sin{\phi _1} =&  - k_{p_1}{e_{{y_1}}} - {k_{d_1}}{\dot y_1} - k_{a_1}\left( {{{\dot \theta }_1}^2 + {{\dot \theta }_2}^2 + {{\dot \theta }_3}^2} \right){\dot y_1}\nonumber \\
&- \frac{1}{2}{m_3}g\tan {\theta _{2d}} - \frac{\sigma\!\rho e_{y}}{\left( \rho \!-\!e_y^2\right) ^2 }\\
\label{input2}
{f_2}\sin{\phi _2} =&  - k_{p_2}{e_{{y_2}}} - {k_{d_2}}{\dot y_2}- k_{a_2}\left( {{{\dot \theta }_1}^2 + {{\dot \theta }_2}^2 + {{\dot \theta }_3}^2} \right){\dot y_2}\nonumber \\
&+ \frac{1}{2}{m_3}g\tan {\theta _{2d}} + \frac{\sigma\!\rho e_{y}}{\left( \rho \!-\!e_y^2\right) ^2 }\\
\label{input3}
{f_1}\cos{\phi _1} =&  - k_{p_3}{e_{{z_1}}} - {k_{d_3}}{\dot z_1} + \frac{1}{2}\left( {2{m_1} + {m_3}} \right)g,\\
\label{input4}
{f_2}\cos {\phi _2} =&  - k_{p_4}{e_{{z_2}}} - {k_{d_4}}{\dot z_2} + \frac{1}{2}\left( {2{m_2} + {m_3}} \right)g,
\end{align}
where $k_{p_1}$, $k_{p_2}$, $k_{p_3}$, $k_{p_4}$, $k_{d_1}$, $k_{d_2}$, $k_{d_3}$, $k_{d_4}$, $k_{a_1}$, $k_{a_2}$, $\sigma$ are positive control gains. In addition, $e_y = e_{y_{1}}-e_{y_{2}}$ and $\rho$ is a positive constant satisfying
\begin{align}
\label{rho}
\rho > \left| e_{y_1}(0) - e_{y_2}(0)\right| ^2,
\end{align}
with $e_{y_1}(0),e_{y_2}(0)$ being initial values. \textcolor[rgb]{0,0,1}{It is worth noting that solving the problem of unknown drone and bar-load mass will overturn most of the previous conclusions, making the difficult control problem more complicated. Thus, we start with the {\emph{Exact Model Knowledge} controller design and theoretical analysis.}}

{\bf\emph{Remark 1:}} The third term in the proposed control input (\ref{input1})-(\ref{input2}) is to enhance the coupling of the system. In general, the traditional model-free PD controller cannot respond to load swing in a timely and efficient manner, making it difficult to achieve satisfactory control effects. The designed term will change accordingly to react efficiently to the swing when the bar is oscillating, which enhances the coupling of the system and facilitates the suppression of the swing.

{\bf\emph{Remark 2:}} The last term in the proposed control inputs (\ref{input1})-(\ref{input2}) is to ensure the distance between the two drones within the desired range, and the subsequent experiments can demonstrate such a capability. For instance, under the influence of external disturbances, the positions of the two drones may be too close, implying that the distance between the two drones in the horizontal direction is about to reach the boundary of the desired range, thus $e_{y}^2\rightarrow \rho^-$, directly making $\frac{\sigma\!\rho e_{y}}{\left( \rho \!-\!e_y^2\right) ^2 } \rightarrow \infty$. At this point, control input (\ref{input1})-(\ref{input2}) will drive the drones away from each other to avoid collision. Similar conclusions can be drawn in the other cases. In this way, owing to the introduction of the last term, the two drones will not collide or go too far away from each other. In practical applications, we can set a proper value for $\rho$ according to the demand of different transportation process to achieve the desired control performance.

\textcolor[rgb]{0,0,1}{{\bf\emph{Remark 3:}} We try to keep the desired position of the bar-load always within the Y-Z plane during the flight. The error signal in the X-axis may come from wind disturbances or other factors. Thus, we design a traditional PID controller to realize the positioning of drones in the X-axis. As supported by subsequent experimental results, the drones' positioning error in the X-direction is usually $\pm 1\,\mathrm{cm}$, and it is  $\pm 3 \, \mathrm{cm}$ even under wind disturbances.}

\textcolor[rgb]{0,0,1}{{\bf\emph{Remark 4:}} By invoking the theory on cascade systems, controllers can be designed for the inner loop and the outer one separately. This manuscript is concentrated on the outer loop subsystem, thus, the controller design procedure and stability analysis are detailed. The inner loop dynamics on the rotation motion of the quadrotor stay the same as those given in literatures \cite{lee2010geometric}}.
	
\textcolor[rgb]{0,0,1}{{\bf\emph{Remark 5:}} By redefining the energy storage energy function given in (9) as follows, the designed controller can be extended to a tracking controller:
	\begin{align}
		E\! =& \frac{1}{2}{\dot {\boldsymbol e}^T}M(\boldsymbol{q})\dot {\boldsymbol e}\!+ \!m_{3}g\left[  \frac{1}{2}{l_1}(1 - C_1)\! +\! \frac{1}{2}{l_2}(1 - C_2)\right], \nonumber
	\end{align}
where the tracking error vector is defined as ${\boldsymbol e}(t)=[e_{y1}(t),e_{z1}(t),e_{y2}(t),e_{z2}(t), \theta_{1}(t), \theta_{2}(t) ]$, where $e_{y1}(t) = y_1(t)-y_{1d}(t), \, e_{z1}(t) = z_1(t)-z_{1d}(t),\,e_{y2}(t) = y_2(t)-y_{2d}(t), \, e_{z2}(t) = z_2(t)-z_{2d}(t)$.Then, with the similar process in this manuscript, the tracking controller can be designed. It can be proved that the system is asymptotically stable by using Lyapunov technique and Barbalat's lemma.}

\subsection{Stability Analysis}
\label{sec:stability}

\begin{theorem}
	For the nonlinear underactuated double drone-bar system, the designed controller given by (\ref{input1})-(\ref{input4}) guarantees that the desired equilibrium point depicted in (\ref{goal}), is asymptotically stable, in the sense of
	
	\begin{align}
		\lim_{t \to \infty} \boldsymbol e_{\xi_{1}} = \boldsymbol 0,\lim_{t \to \infty} \boldsymbol e_{\xi_{2}} = \boldsymbol 0, \lim_{t \to \infty} {\boldsymbol\Theta} = \boldsymbol 0
	\end{align}

\end{theorem}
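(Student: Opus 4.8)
The plan is to construct a Lyapunov function from the storage energy $E$ of (\ref{energy}), augmented by quadratic penalties on the positioning errors and a barrier term enforcing the distance constraint (\ref{rho}). Concretely, I would take
\begin{align}
V = E &+ \tfrac{1}{2}\big(k_{p_1}e_{y_1}^2 + k_{p_2}e_{y_2}^2 + k_{p_3}e_{z_1}^2 + k_{p_4}e_{z_2}^2\big) \nonumber\\
&+ \frac{\sigma}{2}\,\frac{e_y^2}{\rho - e_y^2}. \nonumber
\end{align}
By Assumption \ref{assumption1} the inertia matrix $M(\boldsymbol q)$ is positive definite and $1-C_1,\,1-C_2\ge 0$, so every term of $V$ is nonnegative; moreover the barrier blows up as $e_y^2\to\rho^-$, which will guarantee $e_y^2<\rho$ for all time. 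The design is tuned so that $\frac{d}{dt}\big[\frac{\sigma}{2}\frac{e_y^2}{\rho-e_y^2}\big]=\frac{\sigma\rho e_y}{(\rho-e_y^2)^2}(\dot y_1-\dot y_2)$ reproduces exactly the last terms of (\ref{input1})--(\ref{input2}).

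Next I would differentiate $V$, insert $\dot E$ from (\ref{denergy}), and substitute (\ref{input1})--(\ref{input4}). Since $\dot e_{y_i}=\dot y_i$, $\dot e_{z_i}=\dot z_i$ and $\theta_{2d}=0$, the proportional terms cancel the derivatives of the quadratic penalties, the $\pm\frac{\sigma\rho e_y}{(\rho-e_y^2)^2}$ terms cancel the barrier derivative, and the gravity-compensation constants cancel the weight terms in (\ref{denergy}), leaving
\begin{align}
\dot V ={}& -k_{d_1}\dot y_1^2 - k_{d_2}\dot y_2^2 - k_{d_3}\dot z_1^2 - k_{d_4}\dot z_2^2 \nonumber\\
& - \big(k_{a_1}\dot y_1^2+k_{a_2}\dot y_2^2\big)\big(\dot\theta_1^2+\dot\theta_2^2+\dot\theta_3^2\big)\le 0. \nonumber
\end{align}
Hence $V$ is nonincreasing, all signals are bounded, $e_y^2<\rho$ is preserved, and LaSalle's principle applies.

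Then I would analyze the largest invariant set $\mathcal S\subseteq\{\dot V=0\}$. On $\{\dot V=0\}$ we get $\dot y_1=\dot y_2=\dot z_1=\dot z_2=0$; on $\mathcal S$ these hold for all $t$, so $\boldsymbol\xi_1,\boldsymbol\xi_2$ are constant and $e_{y_i},e_{z_i}$ are frozen. The decisive step is propagating this to the swing. I would use the first two Euler--Lagrange equations, which are the system's linear-momentum balances; on $\mathcal S$ the drone momenta vanish, so they collapse to
\begin{align}
m_3\ddot y_3 = Q_1,\qquad m_3\ddot z_3 = Q_2-(m_1+m_2+m_3)g, \nonumber
\end{align}
with $Q_1=-k_{p_1}e_{y_1}-k_{p_2}e_{y_2}$ and $Q_2=-k_{p_3}e_{z_1}-k_{p_4}e_{z_2}+(m_1+m_2+m_3)g$ both frozen. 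Since $\boldsymbol\xi_3$ is confined to a bounded region (the bar hangs on finite cables from stationary drones), constant nonzero acceleration or velocity of its center is impossible, forcing $\dot y_3=\dot z_3=0$, $Q_1=0$ and $Q_2=(m_1+m_2+m_3)g$. Now $\dot{\boldsymbol\xi}_3=\boldsymbol 0$ reads $lC_1\dot\theta_1-\tfrac{a}{2}S_3\dot\theta_3=0$, $lS_1\dot\theta_1+\tfrac{a}{2}C_3\dot\theta_3=0$, a $2\times2$ system in $(\dot\theta_1,\dot\theta_3)$ with determinant $\tfrac{al}{2}\cos(\theta_1-\theta_3)$, nonzero by Assumption \ref{assumption1} (no cable collinear with the bar); thus $\dot\theta_1=\dot\theta_3=0$, and $\dot{\boldsymbol\xi}_2=\boldsymbol 0$ then gives $lC_2\dot\theta_2=0$, i.e.\ $\dot\theta_2=0$. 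Hence the whole state is at rest on $\mathcal S$.

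Finally, with $\dot{\boldsymbol q}=\ddot{\boldsymbol q}=\boldsymbol 0$ the dynamics reduce to static balance. From $Q_1=0$ and $Q_2=(m_1+m_2+m_3)g$ I obtain $k_{p_1}e_{y_1}+k_{p_2}e_{y_2}=0$ and $k_{p_3}e_{z_1}+k_{p_4}e_{z_2}=0$; combining the per-body force/torque balances with the remaining $\theta$-equilibrium equations and the geometric closure $y_2-y_1=lS_1+lS_2+aC_3$, $z_2-z_1=-lC_1+lC_2+aS_3$, and again using the angle bounds of Assumption \ref{assumption1}, I would show the only feasible configuration is $\theta_1=\theta_2=\theta_3=0$ with $e_{z_1}=e_{z_2}=0$; the closure then forces $e_{y_1}=e_{y_2}$, so $e_y=0$, the barrier term vanishes, and $e_{y_1}=e_{y_2}=0$. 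Thus $\mathcal S$ reduces to the single desired equilibrium (\ref{goal}) and the stated convergence follows. I expect the genuine obstacle to be exactly this reduction of $\mathcal S$: the velocity conditions alone leave the one-DOF internal swing of the frozen-drone linkage undetermined, and only the momentum/boundedness argument together with the non-collinearity in Assumption \ref{assumption1} closes that gap.
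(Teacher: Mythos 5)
Your overall architecture matches the paper's: your Lyapunov function is exactly (\ref{lyaounov}) once $\theta_{2d}=0$ from (\ref{relation0}) is used, your $\dot V$ agrees with (\ref{dotlya}), and the proof proceeds by LaSalle. Your velocity-level reduction of the invariant set is correct and in fact tidier than the paper's: where the paper integrates the horizontal force balance twice in time ((\ref{key-integrate})--(\ref{key-inte3})) and then needs its Lemma~1, a $3\times 3$ determinant computation for the system (\ref{key-case1}), you use both linear-momentum balances together with boundedness of $\boldsymbol{\xi}_3$ to conclude $\dot y_3=\dot z_3=0$, $Q_1=0$ and $Q_2=(m_1+m_2+m_3)g$, after which the $2\times 2$ determinant $\tfrac{al}{2}\cos(\theta_1-\theta_3)\neq 0$ (guaranteed by Assumption \ref{assumption1}) and $\dot y_2=0$ give $\dot\theta_1=\dot\theta_2=\dot\theta_3=0$; this also yields $k_{p_3}e_{z_1}+k_{p_4}e_{z_2}=0$ for free, which the paper derives separately as (\ref{relation1}).

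The genuine gap is the static stage, which you compress into ``I would show the only feasible configuration is $\theta_1=\theta_2=\theta_3=0$ with $e_{z_1}=e_{z_2}=0$.'' That sentence is the conclusion of the hard half of the paper's proof, not an argument. Once velocities vanish, the frozen configuration satisfies the coupled transcendental system (\ref{case3}) together with $k_{p_1}e_{y_1}+k_{p_2}e_{y_2}=0$, $k_{p_3}e_{z_1}+k_{p_4}e_{z_2}=0$, the barrier term, and the closure geometry; tilted hanging equilibria (say $e_{z_2}>0$ with compensating $\theta_1,\theta_2,\theta_3$ and nonzero $e_{y_i}$) are not excluded by inspection. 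The paper needs two non-obvious ingredients to rule them out: (i) Lemma~2, proved in the appendix by contradiction, which uses the thrust bounds (\ref{relation2}) to pin down the signs of $\tan\theta_1$, $\tan\theta_2$, $\theta_3$, $e_{y_1}$, $e_{y_2}$ when $e_{z_2}>0$, and then derives the incompatible inequalities $y_2-y_1>a$ and $y_2-y_1<a$ ((A.9) versus (A.10)); and (ii) Step~2, which eliminates $e_{y_2}\neq 0$ by showing the two sides of (\ref{new1}) must carry opposite signs---the left side has the sign of $-e_{y_2}$ by (\ref{key-inte2}) and the barrier structure, while the right side has the sign of $+e_{y_2}$ by the geometric relation (\ref{right}). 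Neither ingredient appears in your outline. Moreover, the order of deductions you propose hides the difficulty: at a frozen state, (\ref{new1}) makes $\theta_2=0$ \emph{equivalent} to $e_{y_2}=0$, so ``first all angles vanish, then closure gives $e_{y_1}=e_{y_2}=0$'' cannot be carried out with the angle step being routine---any proof that the angles vanish must already contain the sign/geometry argument that kills the position errors. Until that static classification is actually executed, the claim that the largest invariant set is the single point (\ref{goal}) is unjustified.
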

\begin{proof}
	First of all, based on the analysis for the storage energy of the system, the following Lyapunov candidate function is chosen:
	\begin{align}
	\label{lyaounov}
	V(t)\! =& \frac{1}{2}{\dot {\boldsymbol q}^T}\!M\!(\boldsymbol{q})\dot {\boldsymbol q}\!\! +\! \frac{1}{2}k_{p_1}e_{{y_1}}^2 \!\!+\! \frac{1}{2}k_{p_2}e_{{y_2}}^2\! \!+\! \frac{1}{2}k_{p_3}e_{{z_1}}^2 \!\!+\! \frac{1}{2}k_{p_4}e_{{z_2}}^2\nonumber\\
	&\!+ \!m_{3}g\left[  \frac{1}{2}{l_1}(1 - C_1)\! +\! \frac{1}{2}{l_2}(1 - C_2)\right]\!+\!\frac{\sigma e_{y}^2}{2\left( \rho \!-\!e_y^2\right)  } \nonumber\\
	& \!+ \!\frac{1}{2}{m_3}ge_y\tan{\theta _{2d}},
	\end{align}
	Taking the time derivative of $V(t)$, and substituting (\ref{input1})-(\ref{input4}) into $\dot{V}(t)$ yields
	\begin{align}
	\label{dotlya}
	\dot V(t) =&-k_{a_1}\left( {{{\dot \theta }_1}^2 + {{\dot \theta }_2}^2 + {{\dot \theta }_3}^2} \right){{\dot y}_1}^2 - k_{a_2}\left( {{{\dot \theta }_1}^2 + {{\dot \theta }_2}^2 + {{\dot \theta }_3}^2} \right){{\dot y_2}^2}\nonumber\\
	& - {k_{d_1}}{\dot y_1}^2 - {k_{d_2}}{\dot y_2}^2 - {k_{d_3}}{\dot z_1}^2 - {k_{d_4}}{\dot z_2}^2.
	\end{align}
	After some mathematical calculation, the following result can be concluded from (\ref{dotlya}):\textcolor[rgb]{0,0,1}{
	\begin{align}
	\label{limit}
	\dot V(t) \le 0 \Rightarrow {V\left( {{t}} \right)} \le {V\left( 0 \right)}<  + \infty ,
	\end{align}
}where the fact of $\rho > \left| e_{y_1}(0) - e_{y_2}(0)\right| ^2$ shown in (\ref{rho}) is utilized. More specifically, suppose $\left| e_{y_1}(t) - e_{y_2}(t)\right|^2$ tends to exceed the boundary of $\rho$ from interior, then there exists some time making $\left| e_{y_1}(t) - e_{y_2}(t)\right|^2 \!\rightarrow\!\rho^- \Rightarrow \frac{\sigma e_{y}^2}{2\left( \rho \!-\!e_y^2\right)  }\!\rightarrow\! +\infty \Rightarrow V(t) \!\rightarrow\! +\infty$, which conflicts with the conclusion in (\ref{limit}). Therefore, it is concluded that $\frac{\sigma e_{y}^2}{2\left( \rho \!-\!e_y^2\right)}\ge 0$, which further indicates $\frac{1}{2}{m_3}ge_y\tan{\theta _{2d}}\ge-\frac{1}{2}{m_3}g\sqrt{\rho}|\tan{\theta _{2d}}|$. $M(\boldsymbol{q})$ in (\ref{lyaounov}) is a positive definite matrix, thus $(\frac{1}{2}{\dot q^{{T}}}M\dot q)\ge 0$. $k_{p_{1}},k_{p_{2}},k_{p_{3}},k_{p_{4}}$ are positive gains, hence, the term $(\frac{1}{2}k_{p_{1}}e_{y_{1}}^2 + \frac{1}{2}k_{p_{2}}e_{y_{2}}^2 + \frac{1}{2}k_{p_{3}}e_{z_{1}}^2 + \frac{1}{2}k_{p_{4}}e_{z_{2}}^2)\ge 0$. Besides, it is obvious that $m_{3}{g}[ \frac{1}{2}{l_1}({1 - C_1})+\frac{1}{2}{l_2}(1 - {C_2})] \ge m_{3}g[ {\frac{1}{2}{l_1}\left( {1 - 1} \right) + \frac{1}{2}{l_2}\left( {1 - 1} \right)}]$. Consequently, because the terms contained in $V(t)$ are all lower bounded, the function $V(t)$ itself is also lower bounded. Utilizing this fact, together with the conclusion in (\ref{limit}), one can obtain that
	\begin{align}
	V \in {\mathcal{L}_\infty } \Rightarrow &{{{\dot y}}_{{1}}},{\dot z_{{1}}},{\dot \theta _{{1}}},{\dot \theta _2},{\dot \theta _3},{e_{{y_{{1}}}}},{e_{{y_2}}},{y_{{1}}},{z_1} \in {\mathcal{L}_\infty }\nonumber\\
	&\frac{\sigma e_{y}^2}{2\left( \rho \!-\!e_y^2\right)  }\in {\mathcal{L}_\infty }.
	\end{align}
	To prove the asymptotic stability of the designed outer loop controller, the set $\boldsymbol\Omega  = \left\{ {\left( {\boldsymbol q,\dot {\boldsymbol q}} \right)\left| {\dot V = 0} \right.} \right\}$ is defined, and $\boldsymbol\Psi$ is the largest invariant set in $\boldsymbol\Omega$. For clarity, the proof process is divided into the following two steps.
	
	{\bf{Step 1}}: In this part, it will be shown that $e_{z_{1}}=e_{z_{2}}=0$. According to equation (\ref{dotlya}), one can obtain that
	\begin{align}
	\label{dotv=0}
	\dot V\! =\! 0 \Rightarrow {\dot y_1} \!=\! {\dot y_2} \!=\! {\dot z_1} \!= \!{\dot z_2} \!= \!0,{\ddot y_1} \!=\! {\ddot y_2} \!= \!{\ddot z_1} \!=\! {\ddot z_2} \!= \!0.
	\end{align}
	According to the expressions in (\ref{Q1}), adding (\ref{input1}) and (\ref{input2}) yields
	\begin{align}
	\label{key-Q1}
	- k_{p_1}{e_{{y_1}}}	& - k_{p_2}{e_{{y_2}}} = {f_1}\sin {\phi _1} + {f_2}\sin {\phi _2}.
	\end{align}
	Substituting $\ddot y(t)=0$ into (\ref{Q1}) and integrating both sides of (\ref{key-Q1}) with respect to time, one can obtain that
	\begin{align}
	\label{key-integrate}
	& \left( {{m_2} \!+ \!{m_3}} \right){l_1}{C_1}{\dot \theta _1} \!+ \!{m_2}{l_2}{C_2}{\dot \theta _2}
	\!-\!\left( { {m_2} \!+\!\frac{1}{2}{m_3}} \right)a{S_3}{\dot \theta _3} \nonumber\\
	=&\left( { - k_{p_1}{e_{{y_1}}} \!-\! k_{p_2}{e_{{y_2}}}} \right)t\! + \!{\beta _1},
	\end{align}
	wherein $\beta_1$ denotes a constant to be determined. If $ (- k_{p_1}{e_{{y_1}}} - k_{p_2}{e_{{y_2}}})\ne 0$, when $t\rightarrow \infty$, both sides of (\ref{key-integrate}) tend to infinity, which has obvious contradiction with the inference ${\dot \theta _1},{\dot \theta _2},{\dot \theta _3} \in {\mathcal{L}_\infty }$ in (\ref{dotv=0}). Then the following conclusions are drawn:
	\begin{align}
	\label{key-inte2}
	- k_{p_1}{e_{{y_1}}} - k_{p_2}{e_{{y_2}}}= 0\Rightarrow{e_{{y_{1}}}} =  - \frac{{k_{p_2}}}{{k_{p_1}}}{e_{{y_2}}}.
	\end{align}
	\textcolor[rgb]{0,0,1}{ Substituting (\ref{key-inte2}) into (\ref{key-integrate}) and integrating both sides of equation (\ref{key-inte2}) with time yields
	\begin{align}
	\label{key-inte3}
	\left(\!  {{m_2} \! \!+\!\!  {m_3}}\!  \right){l_1}{S_1} \! \!+\!  \!{m_2}{l_2}{S_2} \!\! +\! \! a{C_3}\! \left( \! {{m_2} \!+\! \frac{m_{3}}{2}}\!  \right)\! =\! {\beta _1}t \! \! +\! \! {\beta _2},
	\end{align}
	wherein $\beta_2$ denotes a constant to be determined. If $\beta_1\ne 0$, with similar analysis for (\ref{key-integrate})-(\ref{key-inte2}), one can draw the following conclusion:
	\begin{align}
	\label{key-inte4}
	\left( \! {{m_2}\! \! +\! \! {m_3}} \! \right){l_1}{C_1}{\dot \theta _1} \!\! +\! \! {m_2}{l_2}{C_2}{\dot \theta _2} \! \!-\! \! a{S_3}{\dot \theta _3}\left( \!   m_2 \!+ \!\frac{m_3}{2}\!  \right) \!=\! 0.
	\end{align}
	Taking the time derivative of $\boldsymbol{\xi}_{2}(t)$, we obtain the velocity $\dot{\boldsymbol{\xi}}_{2}(t)$ of the drone on the right. Combining with the equation (\ref{key-inte4}), the following equations can be obtained:
	\begin{align}
	\label{key-case1}
	\begin{cases}
	\! \left( \! {{m_2} \!\! +\! \! {m_3}} \! \right)\! {l_1}\! {C_1}{\dot \theta _1}\!  \!+\!\!  {m_2}\! {l_2}\! {C_2}{\dot \theta _2}\! \! -\! \! a{S_3} {\dot \theta _3}\! \! \left(  \!  m_2\! \! +\! \! \frac{m_3}{2}\!  \right)\! \! =\!0,  \\
	{l_1}{C_1}{{\dot \theta }_1} + {l_2}{C_2}{{\dot \theta }_2} - a{S_3}{{\dot \theta }_3} = 0,\\
	{l_1}{S_1}{{\dot \theta }_1} - {l_2}{S_2}{{\dot \theta }_2} + a{C_3}{{\dot \theta }_3} = 0.
	\end{cases}
	\end{align}
	{\bf{Lemma 1:}} $\dot\theta_{1}\!=\!\dot\theta_{2}\!=\!\dot\theta_{3}\!=\!0$ is the only solution for (\ref{key-case1}).\\
	{\bf{Proof:}} please see the appendix for details.}
	
	According to Cramer's Rule, one can obtain that
	\begin{align}
	\label{dot-theta=0}
	{\dot \theta _1}, {\dot \theta _2}, {\dot \theta _3} = 0 \Rightarrow {\ddot \theta _1},{\ddot \theta _2},{\ddot \theta _3} = 0.
	\end{align}
	According to (\ref{dot-theta=0}) and the expressions in (\ref{Q2}), adding (\ref{input3}) and (\ref{input4}) yields
	\begin{align}
	\label{relation1}
	&{f_1}\cos {\phi _1} + {f_2}\cos {\phi _2} = \left( {{m_{{1}}}{{ + }}{m_{{2}}}{{ + }}{m_{{3}}}} \right){{g}} \nonumber\\
	\Rightarrow & - k_{p_3}{e_{{z_1}}} - k_{p_4}{e_{{z_2}}} = 0.
	\end{align}
	It is worth noting from (\ref{dotv=0}) that when $\dot V=0,\ddot z_{1}=\ddot z_{2}=0$, both of the drones do not provide any acceleration in the vertical direction, thus, they only bear the load and their own gravity. According to Assumption1, we can set the following restrictions on $f_{1}\cos\phi_1,f_2\cos\phi_{2}$ :
	\begin{align}
	\label{relation2}
	{m_1}g < {f_1}\cos {\phi _1} < \left( {{m_1}{ + }{m_3}} \right)g,\nonumber\\
	{m_2}g < {f_2}\cos {\phi _2} < \left( {{m_2}{ + }{m_3}} \right)g.
	\end{align}
	Substituting the results of (\ref{dot-theta=0}) into (\ref{Q3})-(\ref{Q5}), the following result can be obtained :
	\begin{align}
	\label{case2}
	\begin{cases}
	\left( {{m_2} + {m_3}} \right)g{S_1} = {f_2}\sin {\phi _2}{C_1} + {f_2}\cos {\phi _2}{S_1},\\
	- {m_2}g{S_2} = {f_2}\sin {\phi _2}{C_2} - {f_2}\cos {\phi _2}{S_2},\\
	{m_2}g{C_3}\! \!  +\! \!  \frac{1}{2}\! {m_3}g{C_3} \! = \!  -\!  {f_2}\! \sin {\phi _2}\! {S_3} \! \! +\!  \! {f_2}\! \cos {\phi _2}{C_3}.
	\end{cases}
	\end{align}
	Substituting (\ref{input4}) into (\ref{case2}), one can obtain that
	\begin{align}
	\label{case3}
	\begin{cases}
	\tan {\theta _1}\left( {\frac{1}{2}{m_3}g + k_{p_4}{e_{{z_2}}}} \right) = {f_2}\sin {\phi _2},\\
	\tan {\theta _2}\left( {\frac{1}{2}{m_3}g - k_{p_4}{e_{{z_2}}}} \right) = {f_2}\sin {\phi _2},\\
	- k_{p_4}{e_{{z_2}}} = {f_2}\sin {\phi _2}\tan {\theta _3}.
	\end{cases}
	\end{align}
	{\bf{Lemma 2}}: The only solution for (\ref{case3}) is ${e_{{z_2}}} = 0$.\\
	{\bf{Proof}}: Please see appendix for details.
	
	It then follows from Lemma 1 that
	\begin{align}
	\label{relation3}
	{e_{{z_2}}} = 0,{\theta _1} = {\theta _2}{{, }}{\theta _3} = {{0}} \Rightarrow {e_{{z_1}}} = 0.
	\end{align}
	
	{\bf{Step 2}}: In this part, it will be further proved that	$\theta_{1} = \theta_{1d}, \theta_{2} = \theta_{2d}, e_{y_{1}} = e_{y_{2}} = 0$. Firstly, substituting the results of (\ref{input2}) into the second equation of (\ref{case3}), and combining the conclusions in (\ref{dotv=0}) and (\ref{relation3}), the following result can be obtained after simplification:
	\begin{align}
	\label{new1}
	- k_{p_2}{e_{{y_2}}} +\frac{\sigma\!\rho e_{y}}{\left( \rho \!-\!e_y^2\right) ^2 } = \frac{1}{2}{m_3}g(\tan {\theta _2}-{\tan}\theta_{2d}).
	\end{align}
	\textcolor[rgb]{0,0,1}{ Regarding the right half part of (\ref{new1}), the following inference can be made based on (\ref{relation0}) and (\ref{relation3}):
	\begin{align}
	\label{right}
	{\mathrm{sgn}} \! \left( \! {\tan {\theta _2\! \! -\! \! \tan\theta_{2d}}} \! \right)
    \! = \! {\mathrm{sgn}} \Big(\!  {\big( \! {1 \! \! +\! \!  \frac{{k{p_2}}}{{k{p_1}}}} \big){e_{{y_2}}}} \! \Big) \! = \! {\mathrm{sgn}} \! \left(\!  {{e_{{y_2}}}} \! \right).
	\end{align}
}For the left side of (\ref{new1}), based on (\ref{key-inte2}), the following inference can be made:
	\begin{align}
	&- k_{p_2}{e_{{y_2}}} \!+\! \frac{\sigma\!\rho e_{y}}{\left( \rho \!-\!e_y^2\right)^2 }\! =\! -{e_{{y_2}}}\bigg(k_{p_2}\! +\!\frac{\sigma\!\rho (1+\frac{k_{p_{2}}}{k_{p_{1}}})}{\left[ \rho \!-\!(e_{y_{1}}-e_{y_{2}})^2\right] ^2 }  \bigg) \nonumber\\
	\Rightarrow&\,\,\,\, {\mathrm{sgn}} \bigg(- k_{p_2}{e_{{y_2}}} + \frac{\sigma\!\rho e_{y}}{\left( \rho \!-\!e_y^2\right)^2 } \bigg)-{\mathrm{sgn}} \left( {{e_{{y_2}}}} \right).
	\end{align}
	Therefore, we can conclude that the two sides of the equation (\ref{new1}) are opposite, making ${e_{{y_1}}} = 0$ as its only solution. Some similar analysis can be implemented for the signal ${e_{{y_2}}}$, and the following conclusions can be then drawn:
	\begin{align}
	\label{new2}
	{e_{{y_1}}}= {e_{{y_2}}}= 0.
	\end{align}
	
	Combining the results presented in (\ref{dotv=0}), (\ref{relation1}), (\ref{relation3}), (\ref{new2}), there is a unique solution of $\dot V=0$. Therefore, we can obtain that
	\begin{align}
	\label{conclution1}
	\begin{cases}
	{y_1} = {y_{1d}},{y_2} = {y_{2d}},{z_1} = {z_2} = {z_{1d}} = {z_{2d}},\\
	{\theta _1} = {\theta _2} = {\theta _{1d}} = {\theta _{2d}},{\theta _3} = 0.
	\end{cases}
	\end{align}
	
	This calculation implies that $V=0$ is the unique minimum value at the desired equilibrium point. Based on this conclusion, the results of (\ref{dotlya})-(\ref{new2}) are utilized again to conclude that the closed-loop system is stable, and the maximum invariant set $\boldsymbol\Psi$ only contains the desired equilibrium point. According to LaSalle's invariance theorem, it is concluded that the equilibrium point is asymptotically stable.
\end{proof}

\section{Hardware Experiments}
In this section, two groups hardware experimental results are provided to verify the efficiency of the proposed method, especially its robustness against uncertain parameters and external disturbances. The relevant experimental video has been taken and uploaded to YouTube, specifically posted at (\url{https://youtu.be/hHg66bE2PRA}).

\begin{figure}[!htp]
	\centering
	\includegraphics[width=3.2in]{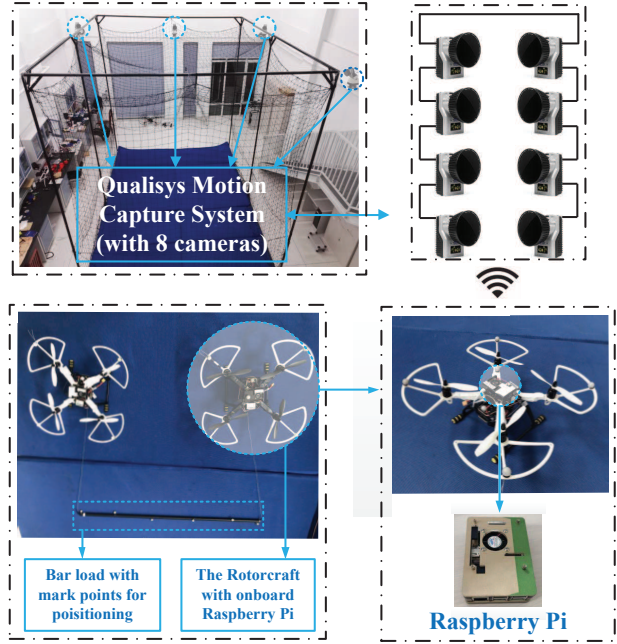}
	\caption{Experiment testbed.}
	\label{fig:testbed}
\end{figure}
The self-built hardware experimental testbed for the drone-bar system is shown in Fig. \ref{fig:testbed}.
The poses of the two drones and the bar load are obtained through the Qualisys Motion Capture System. Data of 8 cameras are sent to the ground station via the Robot Operating System (ROS) based transmission protocol under the LAN. The corresponding control input and torque are calculated on the ground station and then sent to the onboard computer via the WIFI with the 5G band. The flight controller is PIXHAWK, which is connected between the onboard computer and PIXHAWK by mavros based communication protocol. The Raspberry Pi runs the 64-bit Ubuntu-mate 16.04 operating system. The utilization of 5G band wireless network makes its network latency approximately 1-3 milliseconds, which is the best data transmission scheme after a lot of debugging tests. The physical parameters of the experimental testbed are given as follows:
\begin{align}\label{physical parameters}
&m_{1} = m_{2} = 1.5\,{\rm kg},\,m_{3} = 0.3\,{\rm kg},\, g = 9.8\,{\rm m/s^2}.\nonumber\\
& a = 1.2\,{\rm m}, l_{1}=l_{2} = 0.9\,{\rm m}.
\end{align}
 Then, based on the experimental platform, two groups of experiments is carried out to verify the effectiveness of the proposed method.

\subsection{Experiment Group 1}

\begin{table*}[t]
	\renewcommand\arraystretch{1.8}
	\centering
	\caption{Parameters for Experiment 1}
	\label{tab:data_exp1}
	\begin{tabular}{ccc}
		\hline \hline
		{\bf{Exp 1}}    & {\bf{Initial positions}}${\rm (m)}$ & {\bf{Desired positions}}${\rm (m)}$  \\
		\hline
		{Test 1} &  $\quad\quad\quad[y_{1},z_{1}] =[-0.1,1.3],\,[y_{2},z_{2}] = [\,\,\,\,1.5,1.3]\quad\quad\quad$ &  $[y_{1d},z_{1d}] = [-1.3,1.8],\,[y_{2d},z_{2d}] = [-0.1,1.8]$\\
		\hline
		{Test 2}    &  $[y_{1},z_{1}] = [-1.3,1.5],\,[y_{2},z_{2}] =[-0.2,1.5]$ &  $[y_{1d},z_{1d}] = [\,\,\,\,0.0,1.9],\,[y_{2d},z_{2d}] = [\,\,\,\,1.5,1.9]$\\   \hline
		\hline
	\end{tabular}
\end{table*}

In this group of experiments, the proposed controller (\ref{input1})-(\ref{input4}) allows the two drones to reach the desired positions and then hover at that location. To achieve satisfactory convergence speed and anti-swing objectives, we repeat several times for each experiment to adjust the parameters based on experience. The control gains are listed as: 
\begin{align}\label{adjusted parameters}
&k_{p_{1}} = k_{p_{2}} = 5.2, k_{d_{1}}=k_{d_{2}} = 6.0,\nonumber\\
&k_{p_{3}} = k_{p_{4}} = 6.0,  k_{d_{3}}=k_{d_{4}} = 8.0,\nonumber\\
&\delta =4.0 ,\rho =2.0 ,k_{a_{1}}=k_{a_{2}} = 0.75.
\end{align}

Also, we implement some comparative experiments with the classic PD control as the comparative controller, whose control gains are set as $k_{p_{1}} = k_{p_{2}} = 5.2, k_{d_{1}}=k_{d_{2}} = 6.0,k_{p_{3}} = k_{p_{4}} = 6.0,  k_{d_{3}}=k_{d_{4}} = 8.0$ after sufficient tuning. To further verify the robustness of the proposed control method against system uncertainties and various disturbances, the following three tests are carried out.	
\begin{figure}[!htb]
	\centering
	\clearcaptionsetup{figure}
	\clearcaptionsetup{subfloat}
	\captionsetup[subfloat]{labelsep=period}
	\subfloat[ Positions of the two drones for Exp 1-Test 1.]{\includegraphics[width=0.46\textwidth]{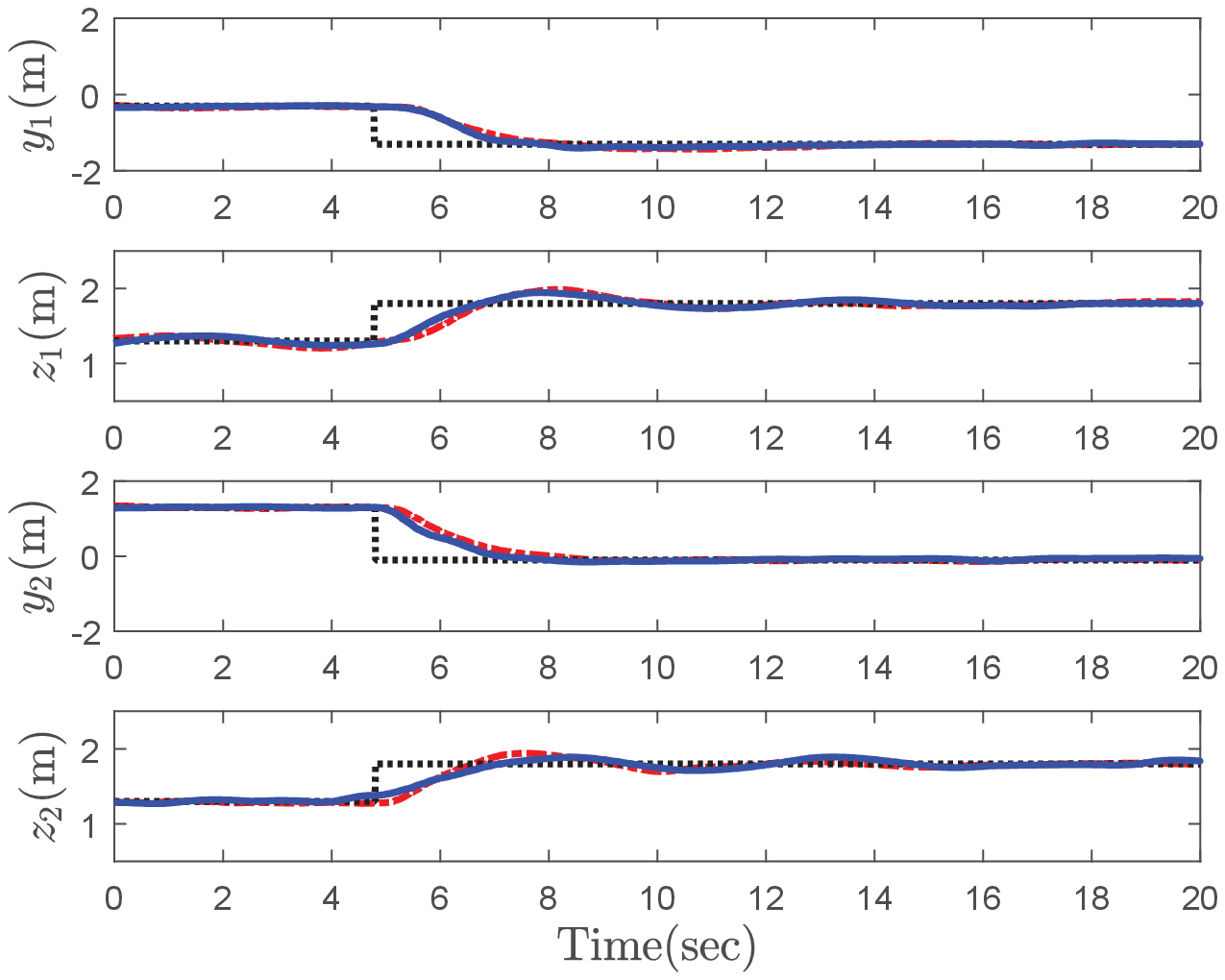}}\hspace{0.0mm}
	\captionsetup[subfloat]{labelsep=period}
	\subfloat[ Control input \& Swing angles.]{\includegraphics[width=0.46\textwidth]{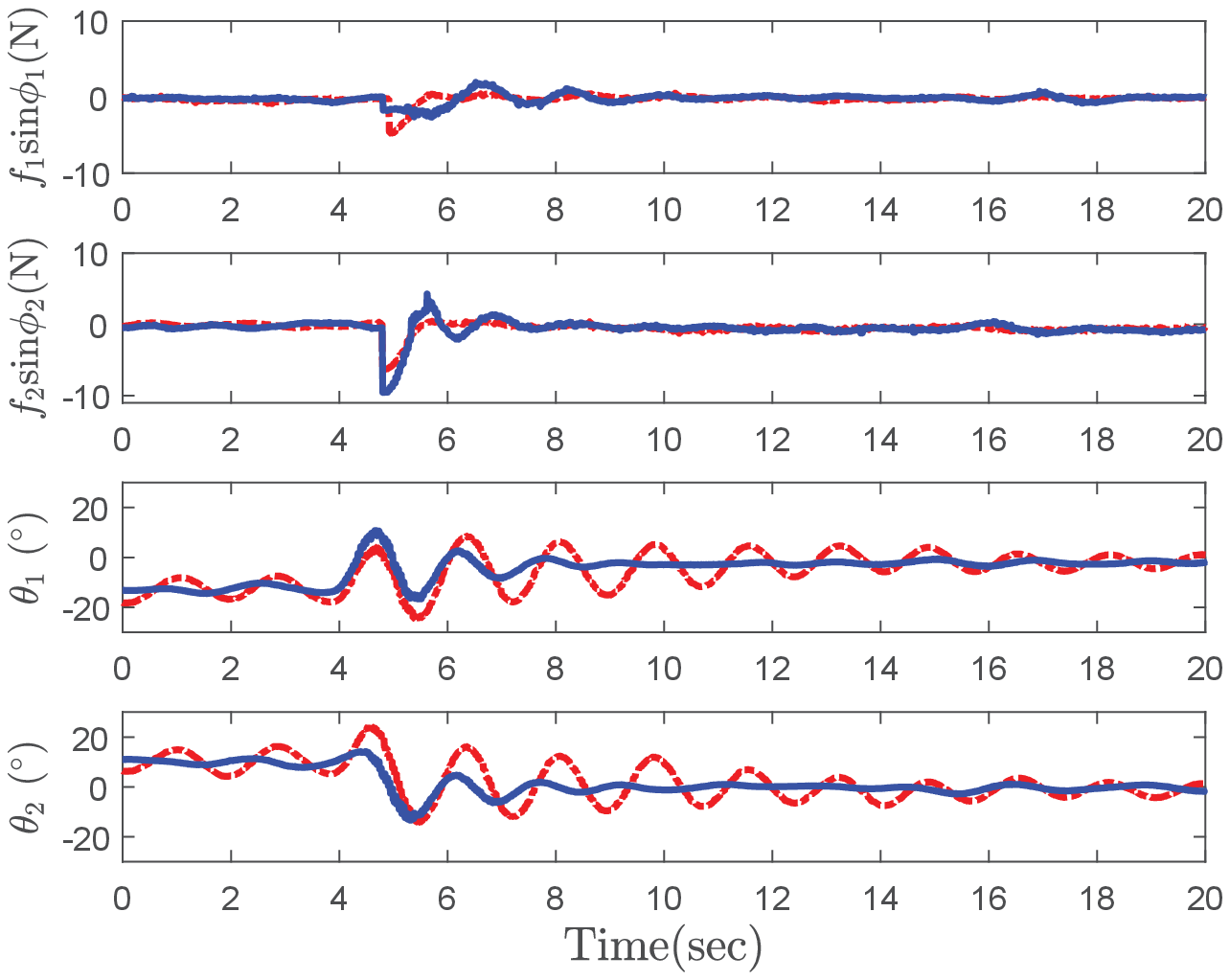}}\hspace{0.0mm}
	\caption{Results for Experiment 1-Test 1: Swing Elimination in transportation process. (black dashed lines: desired positions. red dotted-dashed lines: PD controller. blue solid lines: proposed controller.). }
	\label{figure-exp21}
\end{figure}

\begin{figure}[!htb]
	\centering
	\clearcaptionsetup{figure}
	\clearcaptionsetup{subfloat}
	\captionsetup[subfloat]{labelsep=period}
	\subfloat[ Positions of the two drones for Exp 1-Test 2.]{\includegraphics[width=0.46\textwidth]{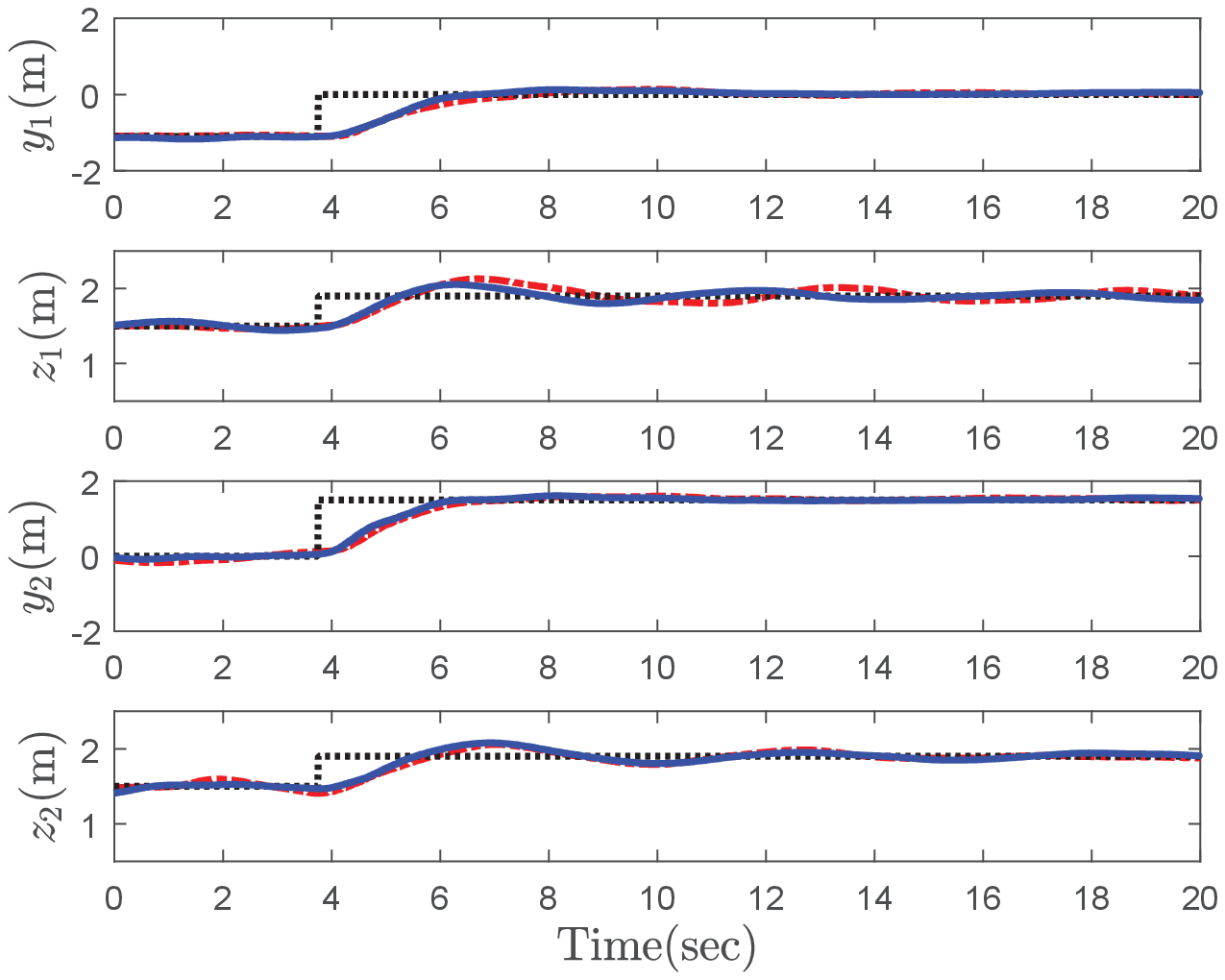}}\hspace{0.0mm}
	\captionsetup[subfloat]{labelsep=period}
	\subfloat[ Control input \& Swing angles.]{\includegraphics[width=0.46\textwidth]{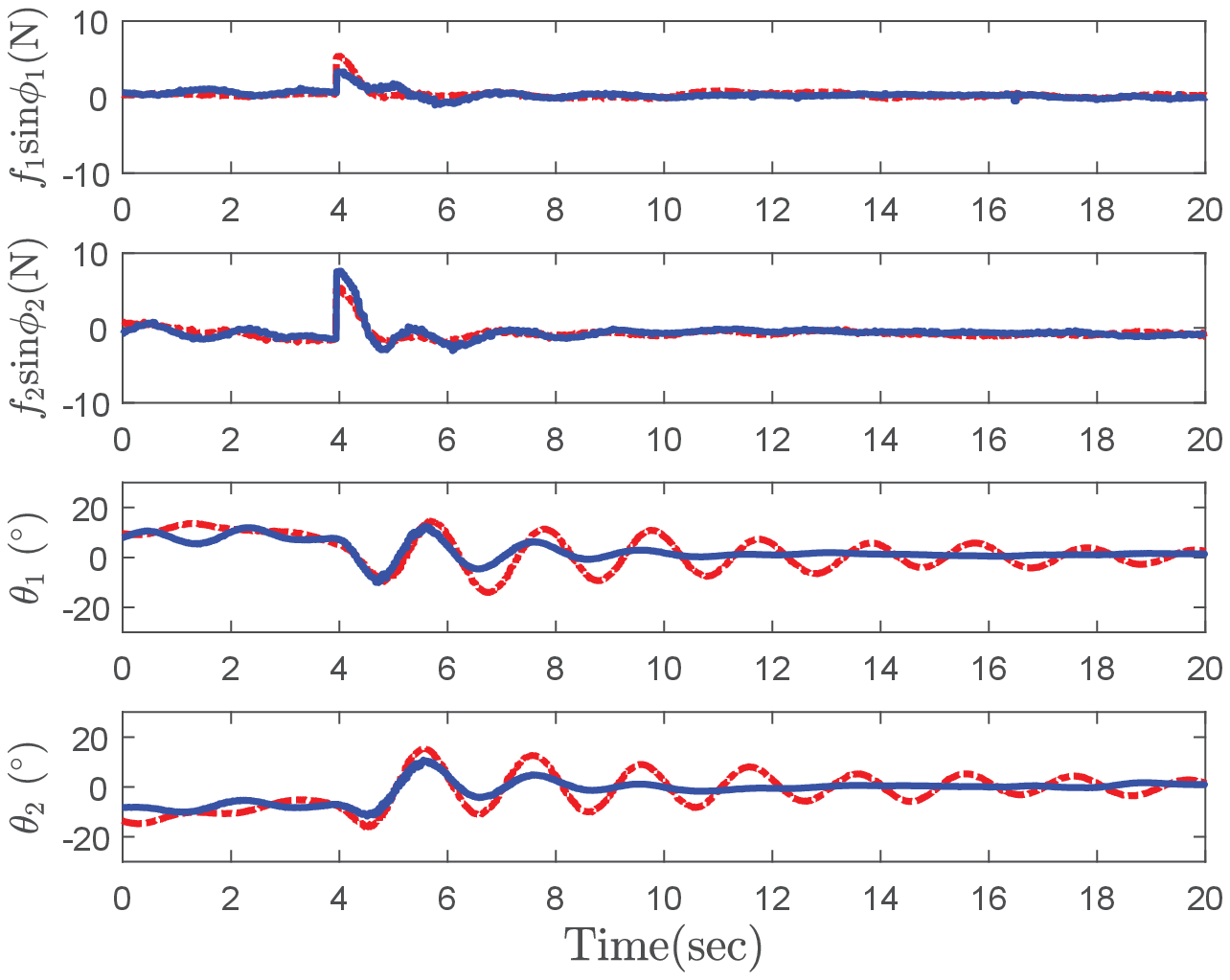}}\hspace{0.0mm}
	\caption{Results for Experiment 1-Test 2: Robustness against system uncertainties. (black dashed lines: desired positions. red dotted-dashed lines: PD controller. blue solid lines: proposed controller.).}
	\label{figure-exp22}
\end{figure}

{\bf Test 1: (Swing Elimination in transportation process).} 
It is seen from Table 1 that the initial and desired positions are set to verify whether the proposed method can meet the basic point-to-point transportation requirements. \textcolor[rgb]{0,0,1}{The maximum wind speed is up to $3\, \mathrm{m/s}$ measured by a digital anemometer AS8336 in the center of the experiment site, and the wind is in the negative direction of the Y-axis. In the actual experiment, the bar will continuously swing under the action of wind, which is particularly apparent in the PD controller, making the load's swing away from zero naturally. }

{\bf Test 2: (Robustness against system uncertainties).} 
To further verify the robustness of the proposed control schemes against parameter uncertainties, the bar's physical parameters and ropes' length  are changed to $m_{3} = 0.5\,{\rm kg},a = 1.5\,{\rm m}, l_{1}=l_{2} = 1.2\,{\rm m}$, yet the other system parameters are still the same with Test 1. The initial and desired positions are shown in Table 1. \textcolor[rgb]{0,0,1}{Like Test 1, this test is also conducted under wind disturbances.}

{\bf Test 3: (Robustness against external disturbances).} 
In the state of drones hovering, we test the anti-swing performance of the proposed method by manually adding the disturbance. The bar is disturbed purposely every $20$ seconds from different directions.
\begin{figure}[!htb]
	\centering
	\clearcaptionsetup{figure}
	\clearcaptionsetup{subfloat}
	\captionsetup[subfloat]{labelsep=period}
	\subfloat[ Positions of the two drones for Exp 1-Test 3.]{\includegraphics[width=0.46\textwidth]{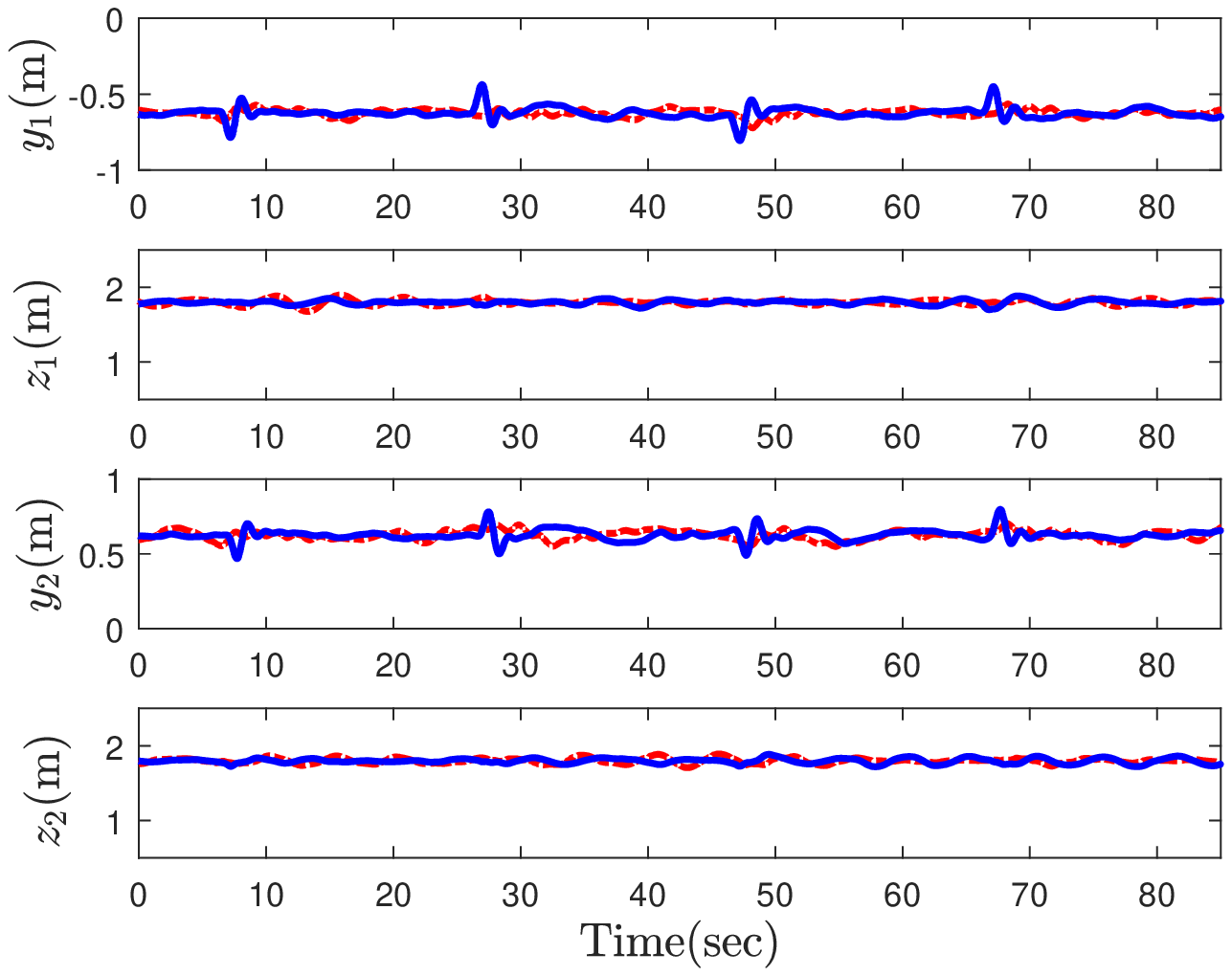}}\hspace{0.0mm}
	\captionsetup[subfloat]{labelsep=period}
	\subfloat[Control input \& Swing angles.]{\includegraphics[width=0.46\textwidth]{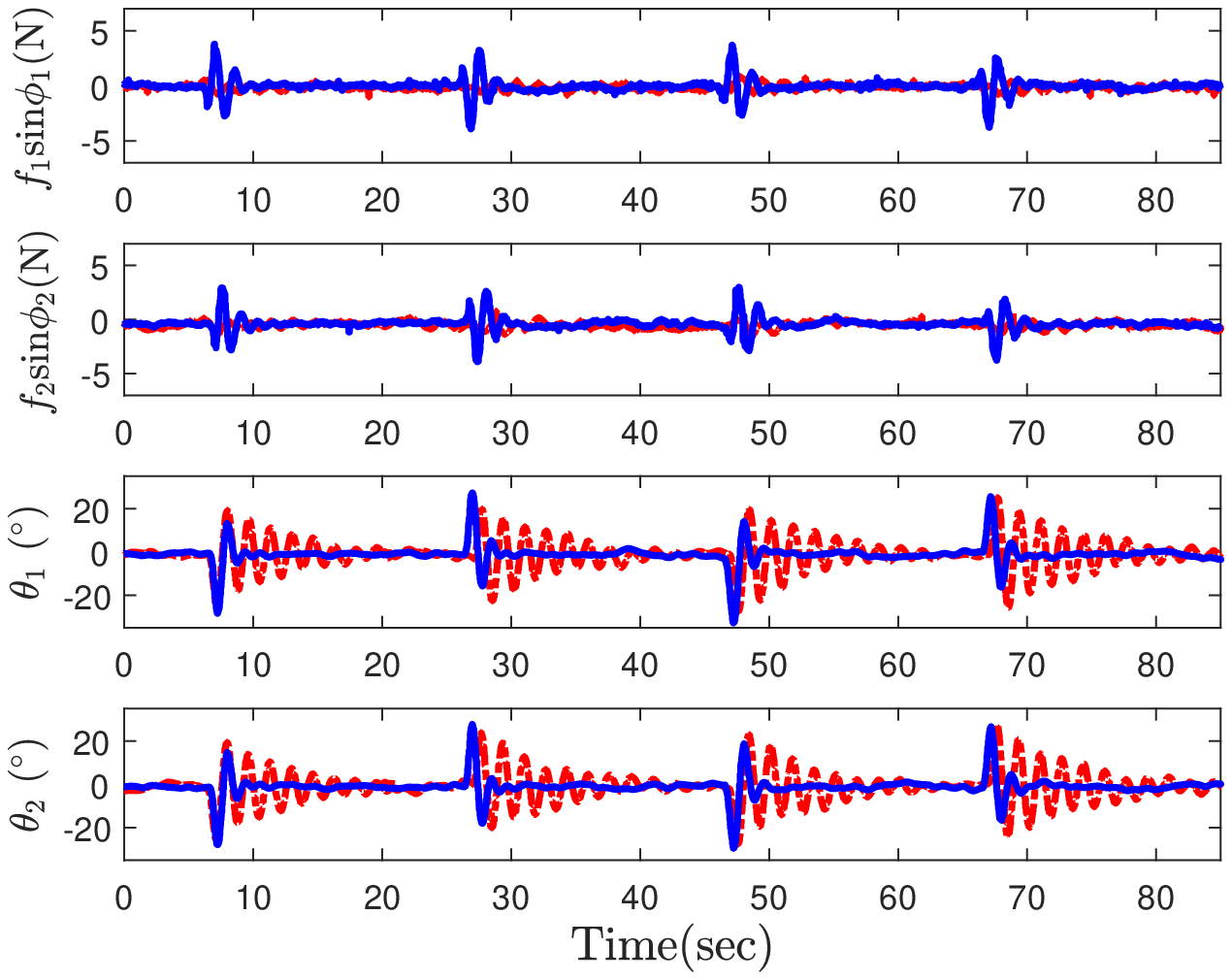}}\hspace{0.0mm}
	\caption{Results for Experiment 1-Test 3: Robustness against external disturbances. (red dotted-dashed lines: PD controller. blue solid lines: proposed controller.).}
	\label{figure-exp23}
\end{figure}
The experimental results of the three tests are provided in Fig. \ref{figure-exp21}-\ref{figure-exp23}, specifically. It is worth noting that the forces of the two control methods in the vertical direction are almost the same. Thus no comparison is shown here to save space. For Test 1, it is seen from Fig. \ref{figure-exp21} that the settling time of the proposed method is about only $20\%$ of that of the PD controller, which indicates the tremendous advantages of the proposed method in transient performance. For Test 2, one can find from Fig. \ref{figure-exp22} that the curves show a better anti-swing performance of the proposed control by comparing it with the classic PD controller, which is similar to the conclusion obtained in Test 1. For Test 3, we can observe from Fig. \ref{figure-exp23} that disturbances manually added from different directions are effectively rejected by the proposed control scheme. Further, when the disturbance is added, the control inputs $f_{1}{\rm sin}\phi_{1},f_{2}{\rm sin}\phi_{2}$ produce significant change to suppress the bar swing, which makes the convergence speed of $\theta_{1},\theta_{2}$ faster than that of PD controller. 

In summary, both controllers have achieved satisfactory drones positioning results, yet, the proposed control scheme (\ref{input1})-(\ref{input4}) presents better swing suppression effect and good robustness concerning system uncertainties and external disturbances such as wind disturbances and manually interference.

\subsection{Experiment Group 2}
\begin{table*}[!htb]
	\renewcommand\arraystretch{1.8}
	\centering
	\caption{Parameters of Test 1 in Experiment 2}
	\begin{tabular}{ccccc}
		\hline \hline
		{\bf{Experiment 2-Test 1}}$\quad$    & {\bf{Initial positions}}${\rm (m)}$ & {\bf{1st change}}${\rm (m)}$  & {\bf{2nd change}}${\rm (m)}$  & {\bf{3rd change}}${\rm (m)}$  \\
		\hline
		{$[y_{1},z_{1}]$} & $[-0.6,1.7]$  & $[-0.4,1.7] $&$ [-0.8,1.7] $&$ [-0.6,1.7]$\\
		{$[y_{2},z_{2}]$} & $[\,\,\,\,0.6,1.7]$  & $[\,\,\,\,0.4,1.7] $&  $[\,\,\,\,0.8,1.7]$& $[\,\,\,\,0.6,1.7]$\\
		\hline
		\hline
	\end{tabular}
\end{table*}
This group of experiments are performed to verify the control effect of the last term $\frac{\sigma\!\rho e_{y}}{\left( \rho \!-\!e_y^2\right) ^2 }$ in the proposed control inputs (\ref{input1})-(\ref{input2}). In addition, we implement some comparative experiments with the classic PD control as the comparative controller, whose control gains are set as $k_{p_{1}} = k_{p_{2}} = 5.2, k_{d_{1}}=k_{d_{2}} = 6.0,k_{p_{3}} = k_{p_{4}} = 6.0,  k_{d_{3}}=k_{d_{4}} = 8.0$ after sufficient tuning. Here, two comparative tests are carried out:

{\bf Test 1: (Relative distance test).} 
In the horizontal direction, the desired positions of the two drones are changed every $10$ seconds, which is recorded in Table $2$.
\begin{figure}[!htb]
	\centering
	{\includegraphics[width=0.46\textwidth]{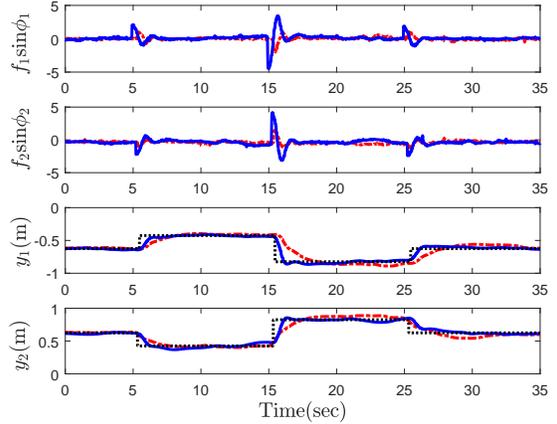}}\hspace{0.0mm}
	\caption{Results for Experiment 2-Test 1: Relative distance test. [(black dashed lines: desired positions. red dotted-dashed lines: PD controller. blue solid lines: proposed controller.)]}
	\label{figure-exp31}
\end{figure}

\begin{figure}[!htb]
	\centering
	{\includegraphics[width=0.46\textwidth]{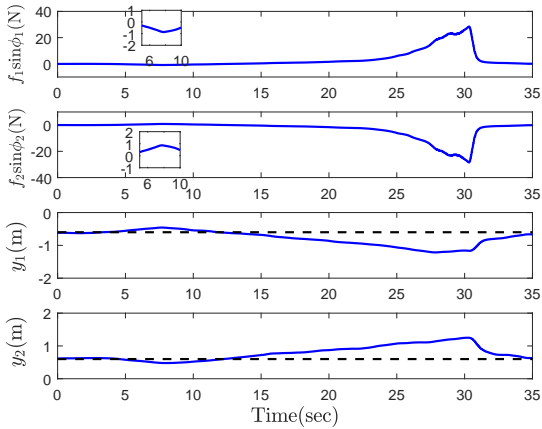}}\hspace{0.0mm}
	\caption{Results for Experiment 2-Test 2: Robustness against external disturbances. [(black dashed lines: desired positions. blue solid lines: proposed controller.)]}
	\label{figure-exp32}
\end{figure}
{\bf Test 2: (Response to the external disturbances brought by the last term).} 
In this test, external disturbance is added to make the two drones closer to or away from each other. The change of $f_{1}{\rm sin}\phi_{1},f_{2}{\rm sin}\phi_{2}$ is recorded to analyze the effectiveness of the proposed control strategy (\ref{input1})-(\ref{input2}).

For the two tests in Experiment 2, system parameters and control gains of the proposed methods are set the same as those in Experiment 1-Test 1. The forces and drones' positions of the two methods in the vertical direction are almost the same. Thus no comparison is shown here to save space. For Test 1, it is seen from Fig. \ref{figure-exp31} that both the proposed control scheme and classic PD method achieve satisfactory positioning control, which is the same as the conclusion of Experiment 1. However, the proposed method is more sensitive to the distance between the drones, which enables it to implement more efficient control for the distance. When the desired positions are changed, the error $e_{y} = e_{y_{1}} - e_{y_{2}}$ will cause the term $\frac{\sigma\!\rho e_{y}^2}{\left( \rho \!-\!e_y^2\right) ^2 }$ to generate a large momentary thrust, which pushes the drones back to the desired positions more quickly than PD method. During the return process, the term $\frac{\sigma\!\rho e_{y}^2}{\left( \rho \!-\!e_y^2\right) ^2 }$ continues to decay until it reaches zero. For Test 2, we constantly decrease and then increase the distance between the two drones in the horizontal direction and record the curves of $f_{1}{\rm sin}\phi_{1},f_{2}{\rm sin}\phi_{2}$. As Fig. \ref{figure-exp32} shows, in the process of increasing the distance, there will be a moment when the force is large enough to pull the two drones back to the desired positions. Considering that the rotating speed of propellers has a physical upper limit, we thus choose larger $\rho$ to avoid saturation. 

\section{Conclusion}
\label{conclusion}
\textcolor[rgb]{0,0,1}{
For the double drone-bar system, this paper focuses on the elimination of the bar-load swing angles and the coordination transportation. Specifically, a precise model set up by the Lagrangian modeling method facilitates the subsequent research. Besides, the proposed coordination controller successfully stabilizes the whole system and guarantees the asymptotic stability of the desired equilibrium point without any linearization or approximations. Many experiments are carried out indoors to verify the effectiveness and robustness of the controller. As the results show, the proposed method has better performance in swing suppression and coordination transportation. In future work, the experimental scene will be extended to the outdoor environment to solve more realistic problems encountered in transportation.}



\bibliographystyle{unsrt}        
\bibliography{autosam}           



\appendix   
\textcolor[rgb]{0,0,1}{
\section{Proof of Lemma 1}
It is learned that $\dot\theta_{1}=\dot\theta_{2}=\dot\theta_{3}=0$ is one of the solutions for (\ref{key-case1}). To show that it is the unique solution, we rewrite (\ref{key-case1}) into the following matrix form:
\begin{align}
A\cdot \left[\dot{\theta}_{1},\,\dot{\theta}_{2},\,\dot{\theta}_{3}\right]^T = 0,
\end{align}
where the coefficient matrix $A\in \mathbb{R}^{3\times 3}$ is defined as:
\begin{align}\label{coefficient matrix}
A = 
\left[                 
\begin{array}{ccc}
({{m_2}+{m_3}}) {l_1}{C_1} & {m_2}{l_2}{C_2} &  -(m_2+\frac{m_3}{2})a{S_3}\\
{l_1}{C_1} & {l_2}{C_2} & - a{S_3}\\ 
{l_1}{S_1} & - {l_2}{S_2} & a{C_3}
\end{array}
\right].
\end{align}
After some calculation, the determinant of $A$ is obtained as:
\begin{align}
\label{rank2}
|A| = \frac{m_3}{2}l_{1}l_{2}a\cdot(C_{1}C_{2+3}+C_{2}C_{1-3}).
\end{align}
According to Assumption 1, the range of the signals $\theta_{1},\theta_{2},\theta_{3}$ is $(-\frac{\pi}{2},\frac{\pi}{2})$. When $|\theta_2+\theta_3| = \frac{\pi}{2}$, the bar and the right rope will be in a straight line(see Fig.\ref{fig:2WMR}), which leads to an apparent contradiction. In addition, due to the effect of gravity, it is impossible that $|\theta_2+\theta_3| > \frac{\pi}{2}$, thus $C_{1}C_{2+3} > 0$. With some similar analysis, one further derives that $C_{1}C_{1-3} > 0$. Therefore, one can conclude that under the circumstance of Assumption 1, $|A| > 0$, which implies that $\dot\theta_{1}=\dot\theta_{2}=\dot\theta_{3}=0$ is the only solution for (\ref{key-case1}).}

\section{Proof of Lemma 2}

It is learned from (\ref{error signals}) that $e_{z_{2}} = z_{2} - z_{2d}$. Subsequently, it will be demonstrated with reduction to absurdity that $e_{z_{2}} = 0$ always holds for (\ref{case3}). This part of ananlysis is split into the following three cases.

\subsection{Suppose $e_{z_{2}} = 0$}
If $e_{z_{2}} = 0$, it can be concluded from (\ref{case3}) that 
\begin{equation*}
\tan\theta_{1}= \tan\theta_{2},\,\,\tan\theta_{3} = 0.\eqno{(A.1)}
\end{equation*}
According to Assumption 1, the range of the signals $\theta_{1},\theta_{2},\theta_{3}$ is $(-\frac{\pi}{2},\frac{\pi}{2})$. Therefore, one always has $\theta_{1} = \theta_{2},\theta_{3} = 0$ for $e_{z_{2}} = 0$.

\subsection{Suppose $e_{z_{2}} > 0$}
In section $2$, the drones' positions are denoted by $\boldsymbol\xi_{1}(t) = [y_1(t),z_1(t)]^{T} = [y,z]^{T}$ and $\boldsymbol\xi_{2}(t) = [y_{2}(t),z_{2}(t)]^{T}=[y+l_{1}S_{1}+l_{2}S_{2}+aC_{3},z-l_{1}C_{1}+l_{2}C_{2}+aS_{3}]^{T}$. If $e_{z_{2}} > 0$, it can be concluded from (\ref{relation0}) and (\ref{relation1}) that 
\begin{equation*}
\label{B_unequal}
\begin{aligned}
&{e_{{z_1}}} = -\frac{k_{p_4}}{k_{p_3}}{e_{{z_2}}} ,\,z_{1d} = z_{2d},\, l_{1} = l_{2}=l\\
\Rightarrow&\,\,e_{z_{1}} < 0,\,z_{2}-z_{1}>0\\
\Rightarrow&\,\,l(C_{2}-C_{1})+aS_{3}>0.
\end{aligned}\eqno{(A.2)}
\end{equation*}
Under the condition of $e_{z_{2}} > 0$ and $-\frac{\pi}{2}<\theta_{1},\theta_{2},\theta_{3}<\frac{\pi}{2}$, one may deduce from (\ref{case3}) that
\begin{equation*}
\begin{aligned}
&|\tan {\theta _1}|\cdot| {\frac{1}{2}{m_3}g + k_{p_4}{e_{{z_2}}}} |=|\tan {\theta _2}|\cdot| {\frac{1}{2}{m_3}g - k_{p_4}{e_{{z_2}}}} |\\
\Rightarrow&\,\,|\tan {\theta _1}|<|\tan {\theta _2}|\,\,\Rightarrow\,\,|\theta_{1}|<|\theta_2|\\
\Rightarrow&\,\,C_1>C_2,
\end{aligned}\eqno{(A.3)}
\end{equation*}
which further leads to 
\begin{equation*}		\label{theta3}
\begin{aligned}
&l(C_{2}-C_{1})+aS_{3}>0,\,C_1>C_2\\
\Rightarrow&\,\,aS_{3}>l(C_{1}-C_{2})>0\\
\Rightarrow&\,\,\theta_{3}>0.
\end{aligned}\eqno{(A.4)}
\end{equation*}
It is concluded from (\ref{case3}) that 
\begin{equation*}
\begin{aligned}
\label{sinphi1}
&- k_{p_4}{e_{{z_2}}} = {f_2}\sin {\phi _2}\tan {\theta _3}\\
\Rightarrow&\,\,{f_2}\sin {\phi _2}\tan {\theta _3}<0,\,\Rightarrow\,{f_2}\sin {\phi _2}<0\\
\Rightarrow&\,\,\tan {\theta _1}( {\frac{1}{2}{m_3}g + k_{p_4}{e_{{z_2}}}})<0\\
\Rightarrow&\,\,\tan {\theta _1}<0,\,\Rightarrow\, \theta_{1}<0.
\end{aligned}\eqno{(A.5)}
\end{equation*}
where (A.4) is utilized. Regarding (A.5), one further makes the following deductions according to (\ref{relation0}), (\ref{input1}), (\ref{dotv=0}), (\ref{key-Q1}) and (\ref{key-inte2}) that
\begin{equation*}
\begin{aligned}\label{ey1}
&-{f_2}\sin {\phi _2}={f_1}\sin{\phi _1} =  - k_{p_1}{e_{{y_1}}} - \frac{\sigma\!\rho e_{y}}{\left( \rho \!-\!e_y^2\right) ^2 }\,>\,0\\
\Rightarrow&\,\,{f_1}\sin{\phi _1} = -{e_{{y_1}}}\bigg(k_{p_1} +\frac{\sigma\!\rho (1+\frac{k_{p_{1}}}{k_{p_{2}}})}{\left[ \rho \!-\!(e_{y_{1}}-e_{y_{2}})^2\right] ^2 }  \bigg)\,>\,0\\
\Rightarrow&\,\,e_{y_{1}}\,<\,0 \,\Rightarrow\, e_{y_{2}}\,>\,0.
\end{aligned}\eqno{(A.6)}
\end{equation*}
Furthermore, because of the restrictions on $f_{1}\cos\phi_{1}$, $f_{2}\cos\phi_{2}$ in (\ref{relation2}), it is inferred from (\ref{input3}), (\ref{dotv=0}), (\ref{relation1}) and (A.6) that
\begin{equation*}
\begin{aligned}
\label{12m3g}
&{m_1}g < {f_1}\cos {\phi _1} < \left( {{m_1}{ + }{m_3}} \right)g\\
\Rightarrow&\,\,- k_{p_3}{e_{{z_1}}} + ( {{m_2} + \frac{1}{2}{m_3}} )g\,<\, \left( {{m_1}{ + }{m_3}} \right)g\\
\Rightarrow&\,\,- k_{p_3}{e_{{z_1}}}<\frac{1}{2}m_{3}g\Rightarrow\,  k_{p_4}{e_{{z_2}}}<\frac{1}{2}m_{3}g\\
\Rightarrow&\,\,\frac{1}{2}m_{3}g - k_{p_4}{e_{{z_2}}} > 0.
\end{aligned}\eqno{(A.7)}
\end{equation*}
Based on the results in (A.5) and (A.7), the following conclusion is derived from (\ref{case3}) that
\begin{equation*}
\begin{aligned}
\label{theta2}
&{f_2}\sin {\phi _2} <0,\,\frac{1}{2}m_{3}g - k_{p_4}{e_{{z_2}}} > 0,\\
&\tan {\theta _2}( {\frac{1}{2}{m_3}g - k_{p_4}{e_{{z_2}}}} t) = {f_2}\sin {\phi _2}\\
\Rightarrow&\,\,\tan {\theta _2}<0\,\Rightarrow\,{\theta _2}<0.
\end{aligned}\eqno{(A.8)}
\end{equation*}

According to the analysis from (A.2) to (A.8), suppose $e_{z_{2}} > 0$, one can obtain that $\theta_{1}<0,\theta_{2}<0,e_{y_{1}}<0,e_{y_{2}}>0$. When $e_{y_{1}}<0,e_{y_{2}}>0$, the distance between the two drones in horizontal direction can be derived as:
\begin{equation*}
\begin{aligned}
\label{conclu1}
&e_{y_{2}}-e_{y_{1}} = (y_{2}-y_{1}) - (y_{2d}-y_{1d})>0\\
\Rightarrow&\,\,y_{2}-y_{1}>y_{2d}-y_{1d}\,\Rightarrow\,y_{2}-y_{1}>a.
\end{aligned}\eqno{(A.9)}
\end{equation*}
When $\theta_{1}<0,\theta_{2}<0$, the distance between the two drones in the horizontal direction can be derived as:
\begin{equation*}
\begin{aligned}
\label{conclu2}
&y_{2}-y_{1} = l_{1}S_{1}+l_{2}S_{2}+aC_{3}<aC_{3}-|l_{1}S_{1}|-|l_{2}S_{2}|\\
\Rightarrow&\,\, y_{2}-y_{1} < a,
\end{aligned}\eqno{(A.10)}
\end{equation*}
where $\boldsymbol\xi_{1}(t) = [y_1(t),z_1(t)]^{T} = [y,z]^{T}$ and $\boldsymbol\xi_{2}(t) = [y_{2}(t),z_{2}(t)]^{T}=[y+l_{1}S_{1}+l_{2}S_{2}+aC_{3},z-l_{1}C_{1}+l_{2}C_{2}+aS_{3}]^{T}$ are utilized. It is obvious that (A.9) and (A.10) are contradictory, thus $e_{z_{2}} > 0$ does not exist.

\subsection{Suppose $e_{z_{2}} < 0$} 
The proof steps for $e_{z_{2}} < 0$ and $e_{z_{2}} > 0$ are roughly the same, thus similar deductions can be performed when $e_{z_{2}} < 0$. It is impossible that $e_{z_{2}} < 0$ as well.

According to the above analysis, the only solution for (\ref{case3}) is $e_{z_{2}}=0$.        

%

\end{document}